\newtheorem{theorem}{Theorem}
\newtheorem{proposition}{Proposition}
\title{\LARGE \bf
Control Synthesis for Bilevel Linear Model Predictive Control
}
\author{Yonatan Mintz$^{1}$, John Audie Cabrera$^{2}$, Jhoanna Rhodette Pedrasa$^{2}$, and Anil Aswani$^{1}$
\thanks{*This work was supported in part by the Philippine-California Advanced Research Institutes (PCARI) and NSF Award CMMI-1450963.}
\thanks{$^{1}$Y. Mintz and A. Aswani are with the Department of Industrial Engineering and Operations Research, University of California, Berkeley, CA 94720 USA 
        {\tt\small ymintz@berkeley.edu, aaswani@berkeley.edu}}%
\thanks{$^{2}$J.A. Cabrera and J.R. Pedrasa are with the Electrical and Electronics Engineering Institute, University of the Philippines, Diliman, Quezon City, Philippines 1101
        {\tt\small john\_audie.cabrera@upd.edu.ph, jipedrasa@up.edu.ph}}%
}
\begin{document}

\maketitle
\thispagestyle{empty}
\pagestyle{empty}

\begin{abstract}
Distributed model predictive control (MPC) is either cooperative or competitive, and control-theoretic properties have been less studied in the competitive (e.g., game theory) setting.  This paper studies MPC with linear dynamics and a Stackelberg game structure: Given a fixed lower-level linear MPC (LoMPC) controller, the bilevel linear MPC (BiMPC) controller chooses inputs to steer LoMPC knowing that LoMPC is optimizing with respect to a different cost function.  After defining LoMPC and BiMPC, we give examples to demonstrate how interconnections in a dynamic Stackelberg game can lead to loss/gain (as compared to the same system being centrally controlled) of controllability or stability.  Then, we give sufficient conditions under an arbitrary finite MPC horizon for stabilizability of BiMPC, and develop an approach to synthesize a stabilizing BiMPC controller.  Next, we define two (a duality-based technique and an integer-programming-based technique) reformulations to numerically solve the optimization problem associated with BiMPC, prove equivalence of these reformulations to BiMPC, and demonstrate their useful by simulations of a synthetic system and a case study of an electric utility changing electricity prices to perform demand response of a home's air conditioner controlled by a linear MPC.
\end{abstract}


\section{Introduction}

Distributed model predictive control (MPC) is classified by information flows and the order of controller computations. Most work \cite{camponogara2002,venkat2005,rawlings2009,scattolini2009,raimondo2009,ma2011,farina2011,ferramosca2013} studies how to decentralize solution of the MPC optimization problem. Hierarchical MPC \cite{scattolini2009,scattolini2007,picasso2010,vermillion2014} has interactions between a supervisory MPC layer and a low-level MPC layer, and both layers are engineered to ensure closed-loop stability.  In contrast, noncooperative MPC \cite{venkat2005,rawlings2009} has several coequal MPC controllers that have differing objective functions. Thus, noncooperative MPC has a game-theoretic interpretation: The stationary solution of the controllers is a Nash equilibrium, which means noncooperative MPC can model competition between agents/systems.

Stackelberg games \cite{von1952} have a leader-follower interconnection structure that has \emph{not} been well-studied in the context of distributed MPC.  In these games, the follower's controller is fixed and the leader engineers their own controller.  It differs from hierarchical MPC \cite{scattolini2009,scattolini2007,picasso2010,vermillion2014} in that only the leader's controller is engineered and the follower's controller may be unstable, and it differs from noncooperative MPC \cite{venkat2005,rawlings2009} in that the follower gets to first observe the leader's control actions and then choose their own control.

Though Stackelberg games have been used in control applications featuring human-automation interactions \cite{basar1979,li2002,aswani2011,zhu2011,vasudevan2012safe,krichene2014,jamaludin2015bilevel,sadigh2016planning}, little attention has been paid towards controllability, stability, and controller synthesis.  Stackelberg games are bilevel programs \cite{colson2007,aswani2015,aswani2016}, which are optimization problems where some constraints are the solutions to a \emph{lower-level} optimization problem.  This perspective of bilevel programs provides a promising framework from which to study control-theoretic questions (like stability and synthesis) of Stackelberg games.


This paper defines and then studies control-theoretic properties of bilevel linear MPC (BiMPC), which is a distributed linear MPC with Stackelberg game structure.  The idea is that the lower-level linear MPC is a model of either the human decision-making process \cite{aswani2011,vasudevan2012safe,krichene2014,sadigh2016planning,aswani2016behavioral} or of an automated system \cite{ma2011,deng2010building,aswani2012,aswani2012energy,aswani2012identifying,jamaludin2015bilevel,he2016}.  And our goal in designing BiMPC is to engineer the system in order to steer the lower-level MPC towards desired configurations.

We first define BiMPC, and then give examples that show how interconnections in dynamic Stackelberg games can lead to loss/gain (as compared to the same dynamics being centrally controlled) of controllability or stability.  Next, we provide sufficient conditions for stabilizability of BiMPC.  An approach to synthesize a stable BiMPC controller is also derived.  We then define two reformulations (based on duality theory \cite{aswani2015,aswani2016} and integer-programming \cite{aswani2016behavioral,aswani2016b}) to numerically solve the optimization problem associated with BiMPC, prove equivalence of these reformulations to BiMPC, and demonstrate their usefulness by simulations of a synthetic system and a case study of an electric utility changing electricity prices to perform demand response of a home's air conditioner controlled by a linear MPC.

\section{Formulation of Bilevel Linear MPC}
Let $\xi\in\mathbb{R}^p$ and $\nu\in\mathbb{R}^q$, and suppose the overall control system is linear $\xi_+ = A\xi+B\nu$ with matrices of dimensions $A\in\mathbb{R}^{p\times p}$ and $B\in\mathbb{R}^{p\times q}$.  We decompose the state space as $\xi^\textsf{T} = \begin{bmatrix}x^\textsf{T} & y^\textsf{T}\end{bmatrix}$ with $x\in\mathbb{R}^{\rho}$ and $y\in\mathbb{R}^{p-\rho}$, and we decompose the input as $\nu^\textsf{T} = \begin{bmatrix}u^\textsf{T} & w^\textsf{T}\end{bmatrix}$ with $u\in\mathbb{R}^{\gamma}$ and $w\in\mathbb{R}^{q-\gamma}$.  It is also useful to decompose $B$ as $B = \begin{bmatrix}B_1 & B_2\end{bmatrix}$ with $B_1 \in\mathbb{R}^{p\times\gamma}$ and $B_2\in\mathbb{R}^{p\times(q-\gamma)}$.  Lastly, the sets $\mathcal{X},\mathcal{Y},\mathcal{U},\mathcal{W}$ are compact, non-singleton polytopes that contain the origin.  These sets are assumed to be characterized by a finite number of linear inequalities, and they are used to provide constraints on $x,y,u,w$, respectively. 

Now let $\langle r\rangle = \{0,\ldots,r-1\}$ and $[r] = \{1,\ldots,r\}$, define the positive semidefinite matrices $U,V\in\mathbb{R}^{p\times p}$, and define the matrix $W\in\mathbb{R}^{q\times q}$ that decomposes as
\begin{equation}
W = \begin{bmatrix}W_1 & \Phi\\\Phi^\textsf{T} & W_2\end{bmatrix}
\end{equation}
with the matrix $W_2\in\mathbb{R}^{(p-\rho)\times(p-\rho)}$ assumed to be positive definite.  To simplify our notation for defining MPC with a time horizon of $N$ time steps, we use $\boldsymbol\xi = \{\xi_1,\ldots,\xi_N\}$, $\mathbf{u} = \{u_0,\ldots,u_{N-1}\}$, and $\mathbf{w} = \{w_0,\ldots,w_{N-1}\}$.  We define the lower-level linear model predictive control (LoMPC) problem with a horizon of $N$ to be
\begin{equation}
\begin{aligned}
\mathbf{P}_L(\xi_0,\mathbf{u}) = \min_{\boldsymbol\xi,\mathbf{w}}\ & \xi_N^\textsf{T}U\xi_N^{\vphantom{\textsf{T}}} + \textstyle\sum_{n=0}^{N-1} \xi_n^\textsf{T}V\xi_n^{\vphantom{\textsf{T}}} + \nu_n^\textsf{T}W\nu_n^{\vphantom{\textsf{T}}}\\
\text{s.t. }& \xi_{n+1} = A\xi_n + B\nu_n \text{ for } n \in\langle N\rangle\\
&y_n \in\mathcal{Y},\, w_n\in\mathcal{W} \hspace{0.145cm} \text{ for } n\in\langle N\rangle\\
&y_N\in\mathcal{Y}_\Omega
\end{aligned}
\end{equation}
where $\mathcal{Y}_\Omega$ is a positively robust invariant set such that given a matrix $K_L\in\mathbb{R}^{(q-\gamma)\times p}$ then the set $\mathcal{Y}_\Omega$ satisfies: (i) $\mathcal{Y}_\Omega\subseteq\mathcal{Y}$ and $K_L\xi\in\mathcal{W}$ for $(x,y)\in\mathcal{X}\times\mathcal{Y}_\Omega$, and (ii) $(A+B_2K_L)\xi + B_1u \in \mathcal{Y}_\Omega$ for $(x,y,u)\in\mathcal{X}\times\mathcal{Y}_\Omega\times\mathcal{U}$.  The set $\mathcal{Y}_\Omega$ can be computed by existing algorithms \cite{mayne1997,kolmanovsky1998,blanchini1999,rakovic2005,mohan2016convex}. 

Next let $P,Q\in\mathbb{R}^{p\times p}$, $R\in\mathbb{R}^{q\times q}$ be positive semidefinite matrices.  
The bilevel linear model predictive control (BiMPC) problem with a horizon of $N$ is given by
\begin{equation}
\begin{aligned}
\mathbf{P}_B(\xi_0) = \min_{\boldsymbol\xi,\mathbf{u},\mathbf{w}}\ & \xi_N^\textsf{T}P\xi_N^{\vphantom{\textsf{T}}} + \textstyle\sum_{n=0}^{N-1} \xi_n^\textsf{T}Q\xi_n^{\vphantom{\textsf{T}}} + \nu_n^\textsf{T}R\nu_n^{\vphantom{\textsf{T}}}\\
\text{s.t. }& (\boldsymbol\xi,\mathbf{w})\in\arg\min_{\boldsymbol\xi,\mathbf{w}} \mathbf{P}_L(\xi_0,\mathbf{u})\\
&h(\xi_0,\boldsymbol\xi) \leq 0\\
&x_n \in\mathcal{X},\, u_n\in\mathcal{U} \text{ for } n\in\langle N\rangle\\
&x_N\in\mathcal{X}
\end{aligned}
\end{equation}
where $h(\xi_0,\boldsymbol\xi)\leq 0$ is a constraint that will be designed in Sect. \ref{section:scfs} to ensure recursive feasibility and stability.  This constraint is similar to a Lyapunov constraint that has been used in certain MPC schemes to ensure stability \cite{lu2000quasi,mhaskar2006stabilization}, though a conceptual difference is that our constraint is needed for both recursive feasibility and stability.

\section{Interconnection Examples}


Interconnections in dynamic Stackelberg games can cause a loss/gain of controllability or stability as compared to the same dynamics when they are centrally controlled by $u$ with $w\equiv 0$, which is behavior not often seen in hierarchical or noncooperative control.  Our examples use $h(\xi_0,\boldsymbol\xi)\equiv 0$ and $\mathcal{X}=\mathcal{Y}=\mathcal{Y}_\Omega=\mathcal{U}=\mathcal{W} = \mathbb{R}$, and we refer to the dynamics on the $x$ states ($y$ states) as the upper (lower) dynamics.


\subsection{Controllability Examples}

Our first example is a problem where the LoMPC is
\begin{equation}
\begin{aligned}
\mathbf{P}_L(\xi_0,\mathbf{u}) = \min_{\boldsymbol\xi,\mathbf{w}}\ & (u_0 + w_0)^2\\
\text{s.t. }& x_1 = 2x_0 + y_0\\
&y_1 = y_0 + u_0+w_0 \\
&y_1 \in\mathbb{R}, w_0\in\mathbb{R}
\end{aligned}
\end{equation}
The overall system is controllable in $u$ when $w\equiv 0$.  But a simple calculation shows the control of LoMPC is $w = -u$, and so for $n\geq 1$ the dynamics seen by BiMPC are
\begin{equation}
\begin{aligned}
&x_{n+1} = 2x_n +y_n\\
&y_{n+1} = y_n
\end{aligned}
\end{equation}
which is not controllable in $u$.  The control action of LoMPC leads to a loss of controllability by BiMPC in this example.

The next example is a problem where the LoMPC is
\begin{equation}
\begin{aligned}
\mathbf{P}_L(\xi_0,\mathbf{u}) = \min_{\boldsymbol\xi,\mathbf{w}}\ & (y_1)^2 + (w_0)^2\\
\text{s.t. }& x_1 = 2x_0 + w_0\\
&y_1 = y_0 + u_0+w_0 \\
&y_1 \in\mathbb{R}, w_0\in\mathbb{R}
\end{aligned}
\end{equation}
The overall system is not controllable in $u$ when $w\equiv 0$.  But a simple calculation shows the control of LoMPC is $w = -(y+u)/2$, and so the dynamics seen by BiMPC are
\begin{equation}
\begin{bmatrix}x_{n+1}\\y_{n+1}\end{bmatrix} = \frac{1}{2}\cdot\begin{bmatrix}4 & -1\\0 & \hphantom{-}1\end{bmatrix}\begin{bmatrix}x_n\\y_n\end{bmatrix}+\frac{1}{2}\cdot\begin{bmatrix}-1\\\hphantom{-}1\end{bmatrix}u_n
\end{equation}
which is controllable in $u$.  The control action of LoMPC leads to a gain of controllability by BiMPC in this example.

\subsection{Stability Examples}

Our first example is a problem where the LoMPC is
\begin{equation}
\begin{aligned}
\mathbf{P}_L(\xi_0,\mathbf{u}) = \min_{\boldsymbol\xi,\mathbf{w}}\ & (y_1)^2 + (w_0)^2\\
\text{s.t. }& x_1 = x_0 + 2y_0 + u_0\\
&y_1 = 2x_0 + y_0 + u_0+w_0 \\
&y_1 \in\mathbb{R}, w_0\in\mathbb{R}
\end{aligned}
\end{equation}
and the BiMPC is
\begin{equation}
\begin{aligned}
\mathbf{P}_B(\xi_0) = \min_{\boldsymbol\xi,\mathbf{u},\mathbf{w}}\ & (x_1)^2 + (u_0)^2\\
\text{s.t. } & x_1 \in\mathbb{R}, u_0\in\mathbb{R}\\
& (\xi_1,w_0) \in \arg\min_{\boldsymbol\xi,\mathbf{w}}\mathbf{P}_L(\xi_0,\mathbf{u})\\
\end{aligned}
\end{equation}
A simple calculation gives that the closed loop system is
\begin{equation}
\begin{bmatrix}x_{n+1}\\y_{n+1}\end{bmatrix} = \frac{1}{4}\cdot\begin{bmatrix}2 & 4\\3&0\end{bmatrix}\begin{bmatrix}x_n\\y_n\end{bmatrix}
\end{equation}
which is unstable.  The lower dynamics are stable when $(x,u) = (0,0)$, and the upper dynamics are stable when $(y,w) = (0,0)$; yet the overall control system is unstable.  But by changing the objective function of the BiMPC to $(x_1)^2 + (u_0)^2 + (y_1)^2$, the closed loop system becomes
\begin{equation}
\begin{bmatrix}x_{n+1}\\y_{n+1}\end{bmatrix} = \frac{1}{5}\cdot\begin{bmatrix}1 & 5\\3&0\end{bmatrix}\begin{bmatrix}x_n\\y_n\end{bmatrix}
\end{equation}
which is stable.  Thus stability of the overall control system is dependent on the gains of the upper and lower dynamics.

As another example, consider the LoMPC given by
\begin{equation}
\begin{aligned}
\mathbf{P}_L(\xi_0,\mathbf{u}) = \min_{\boldsymbol\xi,\mathbf{w}}\ & (w_0)^2\\
\text{s.t. }& x_1 = x_0 + 4y_0 + u_0\\
&y_1 = 4y_0 + u_0+w_0 \\
&y_1 \in\mathbb{R}, w_0\in\mathbb{R}
\end{aligned}
\end{equation}
and the BiMPC is
\begin{equation}
\begin{aligned}
\mathbf{P}_B(\xi_0) = \min_{\boldsymbol\xi,\mathbf{u},\mathbf{w}}\ & (x_1)^2 + (u_0)^2\\
\text{s.t. } &x_1 \in\mathbb{R}, u_0\in\mathbb{R}\\
& (\xi_1,w_0) \in \arg\min_{\boldsymbol\xi,\mathbf{w}}\mathbf{P}_L(\xi_0,\mathbf{u})\\
\end{aligned}
\end{equation}
A simple calculation gives that the closed loop system is
\begin{equation}
\begin{bmatrix}x_{n+1}\\y_{n+1}\end{bmatrix} = \frac{1}{2}\cdot\begin{bmatrix}\hphantom{-}1 & 4\\-1&4\end{bmatrix}\begin{bmatrix}x_n\\y_n\end{bmatrix}
\end{equation}
which is unstable. This example is interesting because the control provided by LoMPC (the control is always $w\equiv 0$) is never stabilizing, while the control of BiMPC stabilizes the upper dynamics when $(y,w) = (0,0)$.  On the other hand, when the objective function of BiMPC is changed to $(x_1)^2 + (u_0)^2 + 3(y_1)^2$, the closed loop system is
\begin{equation}
\begin{bmatrix}x_{n+1}\\y_{n+1}\end{bmatrix} = \frac{1}{5}\cdot\begin{bmatrix}\hphantom{-}4 & 4\\-1&4\end{bmatrix}\begin{bmatrix}x_n\\y_n\end{bmatrix}
\end{equation}
which is stable.  This example shows that in certain situations the BiMPC can stabilize the overall control system independent of the control action provided by LoMPC.

\section{Sufficient Condition For Stability}
\label{section:scfs}

The above examples show that stability of BiMPC depends non-trivially on the dynamics and cost functions, and so we focus on providing sufficient conditions for stabilizability and then develop an approach for controller synthesis in BiMPC.

It is helpful to define some additional notation.  
Let $\Lambda_N = U$, define the matrices
\begin{equation}
\begin{aligned}
&\Theta_n{\vphantom{\textsf{T}}} = W_2^{\vphantom{\textsf{T}}}+B_2^\textsf{T}\Lambda_n^{\vphantom{\textsf{T}}}B_2^{\vphantom{\textsf{T}}}\\
&\Lambda_{n-1}{\vphantom{\textsf{T}}} = A^\textsf{T}\Lambda_n^{\vphantom{\textsf{T}}}A - A^\textsf{T}\Lambda_n^{\vphantom{\textsf{T}}}B_2^{\vphantom{\textsf{T}}}\Theta_n^{-1}B_2^\textsf{T}\Lambda_n^{\vphantom{\textsf{T}}}A + V\\
&\Psi_{n-1}{\vphantom{\textsf{T}}} = -\Theta_n^{-1}B_2^\textsf{T}\Lambda_n^{\vphantom{\textsf{T}}}A_{\vphantom{1}}^{\vphantom{\textsf{T}}}
\end{aligned}
\end{equation}
for $n\in[N]$, and define the matrix $\Gamma^{\vphantom{\textsf{T}}} = \mathbb{I} - B_2^{\vphantom{\textsf{T}}}\Theta_1^{-1}B_2^\textsf{T}\Lambda_1^{\vphantom{\textsf{T}}}$.
Our first result concerns the properties of a specific set.

\begin{proposition}
Let $G\in\mathbb{R}^{\gamma\times p}$ be any matrix.  Then the set 
\begin{equation}
\begin{aligned}
\Xi = \Big\{\xi_0 :\ &G\xi_0 \in \mathcal{U}  \\
&\Theta_1^{-1}(B_2^\textsf{T}\Lambda_1(A+B_1^{\vphantom{\textsf{T}}}G) + \Phi^\textsf{T} G)\xi_0\in\mathcal{W}\\
&\Psi_n\xi_n\in\mathcal{W} \text{ for } n\in[N-1] \\
&\xi_1 = (\Gamma A+(\Gamma B_1-B_2^{\vphantom{\textsf{T}}}\Theta_1^{-1}\Phi^\textsf{T})G)\xi_0^{\vphantom{\textsf{T}}}\\
&\xi_{n+1} = (A+ B_2\Psi_n)\xi_n \text{ for } n\in[N-1]\\
&x_n\in\mathcal{X}, \, y_n \in\mathcal{Y}, \text{ for } n\in\langle N\rangle\\
&x_N\in\mathcal{X}, \, y_N\in\mathcal{Y}_\Omega\Big\}
\end{aligned}
\end{equation}
is non-singleton and contains the origin if $\mathcal{Y}_\Omega$ is non-singleton and contains the origin.
\end{proposition}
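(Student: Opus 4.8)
The plan is to exploit that $\Xi$ is defined purely by linear constraints on $\xi_0$, so it is a polytope, and then to dispatch the two assertions (containing the origin, and being non-singleton) separately.

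First I would collapse the trajectory into $\xi_0$. The two recursions $\xi_1 = (\Gamma A + (\Gamma B_1 - B_2\Theta_1^{-1}\Phi^\textsf{T})G)\xi_0$ and $\xi_{n+1} = (A + B_2\Psi_n)\xi_n$ compose into a single linear map $\xi_n = M_n\xi_0$ for each $n\in[N]$, where $M_n$ is the appropriate product of the indicated matrices. Substituting these expressions, every membership constraint in the definition of $\Xi$ acquires the form $L_j\xi_0\in S_j$ for an explicit matrix $L_j$ and a set $S_j\in\{\mathcal{X},\mathcal{Y},\mathcal{U},\mathcal{W},\mathcal{Y}_\Omega\}$. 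Hence $\Xi=\bigcap_j L_j^{-1}(S_j)$ is a finite intersection of preimages of polytopes under linear (thus continuous) maps, and is therefore itself a closed convex polytope. Establishing $0\in\Xi$ is then immediate: at $\xi_0=0$ the first recursion gives $\xi_1=0$ and the second gives $\xi_n=0$ for all $n\in[N]$ by induction, so each left-hand side $L_j\cdot 0 = 0$; since $\mathcal{X},\mathcal{Y},\mathcal{U},\mathcal{W}$ contain the origin by the standing assumption and $\mathcal{Y}_\Omega$ contains it by hypothesis, every constraint holds.

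For non-singletonness the idea is a scaling/open-preimage argument. Because the origin is the regulation setpoint and lies in the interior of the constraint sets, the product $\prod_j S_j$ contains a neighborhood of $0$; as the map $\xi_0\mapsto(L_1\xi_0,\ldots,L_m\xi_0)$ is continuous and sends $0$ to $0$, the preimage of that neighborhood is open in $\mathbb{R}^p$ and contains $0$, hence contains points $\xi_0\neq 0$, all lying in $\Xi$. More robustly, without assuming interiority one can argue at the level of tangent cones: each $S_j$ is a non-singleton polytope containing $0$, so its tangent cone $T_{S_j}(0)$ is a nontrivial polyhedral cone, and it suffices to exhibit a single nonzero direction $d$ with $L_j d\in T_{S_j}(0)$ for every $j$, since then $\xi_0=td\in\Xi$ for all sufficiently small $t>0$.

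The main obstacle is precisely the existence of such a common admissible direction, and within that the terminal constraint $y_N\in\mathcal{Y}_\Omega$ is the delicate point: the hypothesis guarantees only that $\mathcal{Y}_\Omega$ is non-singleton and contains the origin, not that the origin is interior to it, so the naive scaling argument could fail on this one constraint. I would resolve it by choosing $d$ so that its propagated terminal component $(M_N d)$ restricted to the $y$-block lies in $T_{\mathcal{Y}_\Omega}(0)$ — which is nontrivial exactly because $\mathcal{Y}_\Omega$ is non-singleton and convex, and this is where that hypothesis enters — while keeping $d$ in the (full-dimensional) tangent cones of the remaining interior constraints. Alternatively, the robust invariance property (ii) of $\mathcal{Y}_\Omega$ under the disturbance $B_1u$ with $u$ ranging over $\mathcal{U}$ (whose interior contains $0$) can be invoked to show that the origin is in fact interior to $\mathcal{Y}_\Omega$, after which the clean open-preimage argument applies verbatim and yields a whole ball of nonzero feasible points.
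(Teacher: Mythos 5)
Your main line is exactly the paper's own proof. The origin is in $\Xi$ because propagating $\xi_0=0$ through the (linear, homogeneous) recursions makes every constraint read $L_j\cdot 0=0$, and non-singletonness follows from the small-ball/open-preimage argument: the finitely many matrices $L_j$ have finite norm, so a ball $\{\xi_0:\xi_0^\textsf{T}\xi_0\le r\}$ maps into the constraint sets for $r$ small enough. The paper stops precisely there; it does not engage with the interiority question you raise, so your ``clean'' version already reproduces the published argument. Your instinct that interiority is doing real work is correct --- the paper only assumes the sets \emph{contain} the origin, and without the origin being interior the scaling step (and indeed the statement itself) can fail, e.g.\ if $G\xi_0\in\mathcal{U}$ and $x_0\in\mathcal{X}$ each place the origin at a vertex so that components of $\xi_0$ are pinned to zero from opposite sides. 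The proposition has to be read with $0$ interior to the constraint sets as an implicit standing assumption.

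That said, the two fallbacks you propose for the case where $0$ lies only on the boundary of $\mathcal{Y}_\Omega$ do not close the gap. The tangent-cone route needs a \emph{common} nonzero direction $d$ with $L_jd\in T_{S_j}(0)$ for every $j$; the individual cones being nontrivial does not make the intersection of their preimages nontrivial (the map $d\mapsto(M_Nd)_y$ may be injective with range meeting $T_{\mathcal{Y}_\Omega}(0)$ only at the origin), and you name this obstacle without resolving it. The robust-invariance route is also unsound: property (ii) evaluated at $\xi=0$ only gives that the $y$-block of $B_1\mathcal{U}$ sits inside $\mathcal{Y}_\Omega$, which is a neighborhood of the origin only if those rows of $B_1$ map $\mathcal{U}$ onto a full-dimensional set in $y$-space --- nothing rules out $B_1$ vanishing on those rows, in which case you learn nothing about the interior of $\mathcal{Y}_\Omega$. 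Neither side argument is needed to match the paper, but as stated both would fail; if you want to go beyond the paper, the honest fix is to add $0$ interior to $\mathcal{Y}_\Omega$ (and the other sets) as an explicit hypothesis rather than to derive it.
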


\begin{proof}
We start by proving the origin belongs to $\Xi$.  Note that if $\xi_0 = 0$, then $G\xi_0 = 0$, $\Theta_1^{-1}(B_2^\textsf{T}\Lambda_1(A+B_1^{\vphantom{\textsf{T}}}G) + \Phi^\textsf{T} G)\xi_0 = 0$, $\xi_1 = (\Gamma A+(\Gamma B_1-B_2^{\vphantom{\textsf{T}}}\Theta_1^{-1}\Phi^\textsf{T})G)\xi_0^{\vphantom{\textsf{T}}} = 0$, $\xi_{n+1} = (A+ B_2\Psi_n)\xi_n = 0$ for $n\in[N-1]$, and $\Psi_n\xi_n = 0$ for $n\in[N-1]$.  Thus $0\in\Xi$ since the sets $\mathcal{X},\mathcal{Y},\mathcal{U},\mathcal{W}$ contain the origin.  Next we prove $\Xi$ is non-singleton.  Since $N$ is finite, this means $\Lambda_n,\Psi_n$ for $n\in\langle N\rangle$ and $\Psi_N$ are finite.  So we can pick an $r > 0$ such that $\{\xi_0 : \xi_0^\textsf{T}\xi_0^{\vphantom{\textsf{T}}} \leq r\} \subseteq\Xi$ since $G$, $\Theta_1^{-1}(B_2^\textsf{T}\Lambda_1(A+B_1^{\vphantom{\textsf{T}}}G) + \Phi^\textsf{T} G)$, $\Psi_n$ for $n\in[N-1]$, $\Gamma A+(\Gamma B_1-B_2^{\vphantom{\textsf{T}}}\Theta_1^{-1}\Phi^\textsf{T})G$, and $A+B_2\Psi_n$ for $n\in[N-1]$ have finite norm.  So $\Xi$ is non-singleton.
\end{proof}

With the above definied matrices and set, we can now study stabilizability and controller synthesis for BiMPC.

\begin{theorem}
\label{thm:synstab}
If $(\Gamma A, \Gamma B_1-B_2^{\vphantom{\textsf{T}}}\Theta_1^{-1}\Phi^\textsf{T})$ is stabilizable, then BiMPC is stabilizable.  In particular, let $G\in\mathbb{R}^{\gamma\times p}$ be any matrix so $Z = \Gamma A+(\Gamma B_1-B_2^{\vphantom{\textsf{T}}}\Theta_1^{-1}\Phi^\textsf{T})G$ is Schur stable, and let $H\in\mathbb{R}^{p\times p}$ be the unique positive definite matrix that solves the discrete time Lyapunov equation $Z^\textsf{T}HZ - H = -\mathbb{I}$.  If $\xi_0\in\Xi$ and $X = \{\xi : \xi^\textsf{T}H\xi \leq \xi_0^\textsf{T}H\xi_0^{\vphantom{\textsf{T}}}\} \subseteq \Xi$, then we have that BiMPC with the choice
\begin{equation}
h(\xi_0,\boldsymbol\xi) = \xi_1^\textsf{T}H\xi_1^{\vphantom{\textsf{T}}} - \xi_0^\textsf{T}(H-\mathbb{I})\xi_0^{\vphantom{\textsf{T}}}
\end{equation}
stabilizes the control system, is recursively feasible, and ensures $\xi_n \in \mathcal{X}\times\mathcal{Y}$, $u_n\in\mathcal{U}$, and $w_n\in\mathcal{W}$ for all $n\geq 0$.
\end{theorem}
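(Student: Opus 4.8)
The plan is to treat the quadratic form $V(\xi) = \xi^\textsf{T}H\xi$ as a Lyapunov function for the closed loop and to read the terminal constraint $h(\xi_0,\boldsymbol\xi)\leq 0$ as the one-step decrease condition $\xi_1^\textsf{T}H\xi_1 \leq \xi_0^\textsf{T}H\xi_0 - \xi_0^\textsf{T}\xi_0$. The candidate feasible point I would exhibit at a state $\xi_0\in\Xi$ uses the outer input $u_0 = G\xi_0$ and $u_n = 0$ for $n\in[N-1]$. The first claim to establish is that the LoMPC-optimal response $(\boldsymbol\xi,\mathbf{w})$ to this input sequence is exactly the feedback encoded in the definition of $\Xi$: the induced state trajectory is $\xi_1 = Z\xi_0$ and $\xi_{n+1} = (A+B_2\Psi_n)\xi_n$ for $n\in[N-1]$, while the corresponding $w$-trajectory is the one whose $\mathcal{W}$-membership is asserted by the two $\mathcal{W}$-conditions in $\Xi$ (the $\Theta_1^{-1}(\cdot)\xi_0$ term at the first step and $\Psi_n\xi_n$ thereafter).

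To justify that this feedback really solves the inner problem, I would use that $W_2\succ 0$ forces $\Theta_n\succ 0$, so $\mathbf{P}_L(\xi_0,\mathbf{u})$ is strictly convex in $\mathbf{w}$ and has a unique minimizer. The listed feedback is precisely the unconstrained LQR minimizer obtained from the backward recursion defining $\Lambda_n,\Theta_n,\Psi_n$; because all future outer inputs vanish, the affine corrections in this feedback vanish for $n\geq 1$, leaving exactly $w_n = \Psi_n\xi_n$, and at the first step the extra $u_0$-dependent term produces the $\Theta_1^{-1}(\cdot)$ expression and the closed-loop matrix $Z$. Hence, whenever this minimizer is feasible for the inner constraints, it is the inner optimum. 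Membership $\xi_0\in\Xi$ is exactly what guarantees that every inner and outer constraint ($G\xi_0\in\mathcal{U}$, the two $\mathcal{W}$-conditions, $x_n\in\mathcal{X}$, $y_n\in\mathcal{Y}$, $x_N\in\mathcal{X}$, $y_N\in\mathcal{Y}_\Omega$) holds along this trajectory, so the candidate is feasible for $\mathbf{P}_B(\xi_0)$ except for $h$. For $h$ I would invoke the discrete Lyapunov equation: since $Z^\textsf{T}HZ = H-\mathbb{I}$ we get $\xi_1^\textsf{T}H\xi_1 = \xi_0^\textsf{T}Z^\textsf{T}HZ\xi_0 = \xi_0^\textsf{T}(H-\mathbb{I})\xi_0$, so $h(\xi_0,\boldsymbol\xi)=0$ and the candidate is feasible.

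Recursive feasibility and stability then follow together. Any optimal solution of $\mathbf{P}_B(\xi_0)$ satisfies $h\leq 0$, so the realized next state obeys $\xi_1^\textsf{T}H\xi_1\leq \xi_0^\textsf{T}H\xi_0 - \xi_0^\textsf{T}\xi_0$; hence $V$ is nonincreasing and every closed-loop state remains in the sublevel set $X\subseteq\Xi$. I would then induct: $\xi_n\in X\subseteq\Xi$ makes the candidate feasible for $\mathbf{P}_B(\xi_n)$ (with $\xi_n$ in place of $\xi_0$ and $Z\xi_n$ giving $h=0$ again), which yields recursive feasibility, and feasibility of each solved problem gives $\xi_n\in\mathcal{X}\times\mathcal{Y}$, $u_n\in\mathcal{U}$, $w_n\in\mathcal{W}$ for all $n\geq 0$. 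Summing the decrease $V(\xi_{n+1})-V(\xi_n)\leq -\xi_n^\textsf{T}\xi_n$ over $n$ and using $H\succ 0$ gives $\sum_n\|\xi_n\|^2\leq \xi_0^\textsf{T}H\xi_0<\infty$, so $\xi_n\to 0$, and positive definiteness of $V$ delivers Lyapunov stability of the origin.

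The step I expect to be the main obstacle is the first one: rigorously linking the abstract bilevel $\arg\min$ constraint to the explicit Riccati feedback, and verifying that the inner state/input constraints are inactive along the candidate trajectory so that the unconstrained inner minimizer is genuinely the constrained inner optimum. This is exactly the role played by the construction of $\Xi$ together with the standing assumption $W_2\succ 0$ (which secures uniqueness via strict convexity). Once this identification is in place, the remaining sublevel-set invariance and Lyapunov-decrease arguments are routine receding-horizon reasoning; the only auxiliary hypothesis they consume is $X\subseteq\Xi$, which supplies the invariance needed to keep the candidate feasible at every step.
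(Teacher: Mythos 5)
Your proposal is correct and follows essentially the same route as the paper's proof: exhibit the candidate $\mathbf{u}=\{G\xi_0,0,\ldots,0\}$, use the backward Riccati recursion to identify the inner optimum with the feedback encoded in $\Xi$ (with $\Xi$ guaranteeing the unconstrained minimizer satisfies the inner constraints), verify $h=0$ via the Lyapunov equation, and then use $h\leq 0$ at the true BiMPC optimum together with $X\subseteq\Xi$ to get recursive feasibility and the Lyapunov decrease. The only differences are cosmetic: you make explicit the uniqueness of the inner minimizer from $W_2\succ 0$ and spell out the summability argument for $\xi_n\to 0$, both of which the paper leaves implicit.
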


\begin{proof}
Suppose $\mathbf{u} = \{G\xi_0,0,\ldots,0\}$.  Then a dynamic programming calculation on LoMPC gives $w_n = \Psi_nA\xi_n$ and $\xi_{n+1} = (A+B_2\Psi_nA)\xi_n$ for $n\in[N-1]$, and that
\begin{equation}
\label{eqn:dponestep}
\begin{aligned}
\mathbf{P}_L(\xi_0,\mathbf{u}) = \min_{\xi_1,w_0}\ & \xi_1^\textsf{T}\Lambda_1^{\vphantom{\textsf{T}}}\xi_1^{\vphantom{\textsf{T}}} + \xi_0^\textsf{T}V\xi_0^{\vphantom{\textsf{T}}} + \nu_0^\textsf{T}W\nu_0^{\vphantom{\textsf{T}}}\\
\text{s.t. }& \xi_1 = A\xi_0 + B\nu_0\\
&y_0,y_1 \in\mathcal{Y},\, w_0\in\mathcal{W}
\end{aligned}
\end{equation}
when $w_{n+1}\in\mathcal{W}$ for $n\in\langle N-1\rangle$, $y_{n+2}\in\mathcal{Y}$ for $n\in\langle N-2\rangle$, and $y_N\in\mathcal{Y}_\Omega$.  Since $u_0 = G\xi_0$, solving (\ref{eqn:dponestep}) gives 
\begin{equation}
\begin{aligned}
&w_0 = \Theta_1^{-1}(B_2^\textsf{T}\Lambda_1(A+B_1^{\vphantom{\textsf{T}}}G) + \Phi^\textsf{T} G)\xi_0\\
&\xi_1 = (\Gamma A+(\Gamma B_1-B_2^{\vphantom{\textsf{T}}}\Theta_1^{-1}\Phi^\textsf{T})G)\xi_0
\end{aligned}
\end{equation}
when $w_0\in\mathcal{W}$ and $y_0,y_1\in\mathcal{Y}$.  But if $\xi_0\in\Xi$, then the above described requirements on $y_N$ and $w_n,y_n$ for $n\in\langle N-1\rangle$ hold by definition of $\Xi$.  And so the above values of $\boldsymbol\xi, \mathbf{w}$ are in fact the minimizers of LoMPC when $\mathbf{u} = \{G\xi_0,0,\ldots,0\}$ and $\xi_0\in\Xi$, which implies the above $\boldsymbol\xi, \mathbf{u}, \mathbf{w}$ are feasible for BiMPC since $h(\xi_0,\boldsymbol\xi) = \xi_0^\textsf{T}(\Gamma A+(\Gamma B_1-B_2^{\vphantom{\textsf{T}}}\Theta_1^{-1}\Phi^\textsf{T})G)^TH(\Gamma A+(\Gamma B_1-B_2^{\vphantom{\textsf{T}}}\Theta_1^{-1}\Phi^\textsf{T})G)\xi_0 - \xi_0^\textsf{T}(H-\mathbb{I})\xi_0^{\vphantom{\textsf{T}}}=0$ by the discrete time Lyapunov equation.

Now consider the (possibly different) values $\boldsymbol\xi, \mathbf{u}, \mathbf{w}$ that are optimal for BiMPC.  This minimizer exists because we showed that BiMPC is feasible when $\xi_0 \in\Xi$.  By definition of LoMPC and BiMPC we have $u_0\in\mathcal{U}$, $w_0\in\mathcal{W}$, and $\xi_1\in\mathcal{X}\times\mathcal{Y}$ when we use the optimal $\boldsymbol\xi, \mathbf{u}, \mathbf{w}$.  Furthermore, our choice of $h(\xi_0,\boldsymbol\xi)$ gives that $\xi_1^\textsf{T}H\xi_1^{\vphantom{\textsf{T}}} \leq \xi_0^\textsf{T}H\xi_0^{\vphantom{\textsf{T}}} - \xi_0^\textsf{T}\xi_0^{\vphantom{\textsf{T}}} \leq \xi_0^\textsf{T}H\xi_0^{\vphantom{\textsf{T}}}$.  This means $\xi_1^{\vphantom{\textsf{T}}}\in X$, and so by assumption we have $\xi_1\in\Xi$ since we assumed $X\subseteq\Xi$.  Using the same argument as above, this implies BiMPC is feasible for $\xi_1$.  This proves recursive feasibility and recursive constraint satisfaction.  Stability of BiMPC follows by noting $\xi^\textsf{T}H\xi$ is a Lyapunov function for the control provided by BiMPC.
\end{proof}

Observe that this result gives a method for synthesizing a controller because $\Gamma,\Theta_1$ are both constant matrices that can be computed using matrix operations on $A,B,U,V,W$, and so controller design for BiMPC consists of appropriately choosing the matrices $G, P,Q,R$ and computing the matrix $H$ by solving a discrete time Lyapunov equation.

\section{Duality Approach to Solving BiMPC}

New algorithms that use duality theory to solve bilevel programs have recently been proposed \cite{aswani2015,aswani2016}, and here we describe how to adapt these approaches to solve BiMPC. 

\subsection{Duality-Based Reformulation of BiMPC}

We first specify some notation: Define $\|\xi\|_M^2 = \xi^\textsf{T}M\xi$ for a matrix $M$, $\mathcal{X} = \{\xi: F_x\xi \leq g_x\}$, $\mathcal{Y} = \{\xi : F_y\xi \leq g_y\}$, $\mathcal{Y}_\Omega = \{\xi : F_o\xi \leq g_o\}$, $\mathcal{U} = \{\nu : F_u\nu \leq g_u\}$, $\mathcal{W} = \{w : F_ww\leq g_w\}$, $\boldsymbol\mu = \{\mu_0,\ldots,\mu_{N-1}\}$, $\boldsymbol\lambda = \{\lambda_0,\ldots,\lambda_{N}\}$, and $\boldsymbol\gamma = \{\gamma_0,\ldots,\gamma_{N-1}\}$.  With this notation, we next present our duality-based reformulation of BiMPC:
\begin{multline}
\label{eqn:dbreform}
\mathbf{P}_{DB}(\xi_0, \epsilon) = \\
\begin{aligned}
\min_{\substack{\boldsymbol\xi,\mathbf{u},\mathbf{w}\\\boldsymbol\mu,\boldsymbol\lambda,\boldsymbol\gamma}}\ & \xi_N^\textsf{T}P\xi_N^{\vphantom{\textsf{T}}} + \textstyle\sum_{n=0}^{N-1} \xi_n^\textsf{T}Q\xi_n^{\vphantom{\textsf{T}}} + \nu_n^\textsf{T}R\nu_n^{\vphantom{\textsf{T}}}\\
\text{s.t. }& \xi_N^\textsf{T}U\xi_N^{\vphantom{\textsf{T}}} + \textstyle\sum_{n=0}^{N-1}\xi_n^\textsf{T}V\xi_n^{\vphantom{\textsf{T}}} + \nu_n^\textsf{T}W\nu_n^{\vphantom{\textsf{T}}}+\\
&\hspace{2.5cm} - \zeta(\xi_0,\mathbf{u},\boldsymbol\mu,\boldsymbol\lambda,\boldsymbol\gamma) \leq \epsilon\\
&\xi_{n+1} = A\xi_n + B\nu_n \text{ for } n \in\langle N\rangle\\
&\xi_1^\textsf{T}H\xi_1^{\vphantom{\textsf{T}}} - \xi_0^\textsf{T}(H-\mathbb{I})\xi_0^{\vphantom{\textsf{T}}}\leq 0\\
&F_xx_n\leq g_x,\, F_yy_n\leq g_y \text{ for } n\in\langle N\rangle\\
&F_uu_n\leq h_u,\, F_ww_n\leq h_w \text{ for } n\in\langle N\rangle\\
&F_xx_N\leq g_x,\, F_oy_N\leq g_o\\
&\lambda_n \geq 0,\, \gamma_n \geq 0 \text{ for } n\in\langle N\rangle
\end{aligned}
\end{multline}
where $\lambda_n, \mu_n, \gamma_n$ have appropriate dimensions to define
\begin{multline}
\label{eqn:langdualfun}
\zeta(\xi_0,\mathbf{u},\boldsymbol\mu,\boldsymbol\lambda,\boldsymbol\gamma) = \textstyle-\frac{1}{4}\|F_o^\textsf{T}\lambda_N^{\vphantom{\textsf{T}}} + \mu_{N-1}^{\vphantom{\textsf{T}}}\|_{U^\dag}^2 - g_o^\textsf{T}\lambda_N^{\vphantom{\textsf{T}}}+\\
\textstyle\sum_{n=1}^{N-1} \Big[-\frac{1}{4}\|F_y^\textsf{T}\lambda_n^{\vphantom{\textsf{T}}} + \mu_{n-1}^{\vphantom{\textsf{T}}} - A^\textsf{T}\mu_{n}^{\vphantom{\textsf{T}}}\|_{V^\dag}^2 - g_y^\textsf{T}\lambda_n^{\vphantom{\textsf{T}}}\Big] +\\
\textstyle\sum_{n=0}^{N-1}\Big[-\frac{1}{4}\|F_w^\textsf{T}\gamma_n^{\vphantom{\textsf{T}}}- B^\textsf{T}\mu_{n}^{\vphantom{\textsf{T}}} + 2\Phi^\textsf{T}u_n\|_{W_2^{-1}}^2 +\\
\textstyle- g_w^\textsf{T}\gamma_n^{\vphantom{\textsf{T}}} + u_n^\textsf{T}W_1u_n^{\vphantom{\textsf{T}}}\Big]+(F_y\xi_0 - g_y)^\textsf{T}\lambda_0^{\vphantom{\textsf{T}}}-\xi_0^\textsf{T}A^\textsf{T}\mu_0
\end{multline}
and $U^\dag$, $V^\dag$ are the Moore-Penrose pseudoinverse of $U$, $V$.  Our next result shows that the solutions of this duality-based reformulation match the solutions of BiMPC.

\begin{theorem}
Consider the problem $\mathbf{P}_{DB}(\xi_0, \epsilon)$.  We have that $\arg\min \mathbf{P}_B(\xi_0) = \arg\min \mathbf{P}_{DB}(\xi_0, 0)$ and
\begin{equation}
\lim_{\epsilon\rightarrow 0}\mathrm{dist}(\arg\min\mathbf{P}_{DB}(\xi_0, \epsilon), \arg\min \mathbf{P}_B(\xi_0)) = 0,
\end{equation}
where $\mathrm{dist}(\mathcal{S},\mathcal{T}) = \sup_s\inf_t\{\|s - t\|\ |\ s\in\mathcal{S}, t\in\mathcal{T}\}$.
\end{theorem}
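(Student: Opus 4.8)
The plan is to exploit the fact that the lower-level problem $\mathbf{P}_L(\xi_0,\mathbf{u})$ is, for fixed $(\xi_0,\mathbf{u})$, a convex quadratic program in $(\boldsymbol\xi,\mathbf{w})$: its objective is convex because $U,V$ are positive semidefinite and $W_2$ is positive definite, while all of its constraints (the affine dynamics and the polytopic memberships $y_n\in\mathcal{Y}$, $w_n\in\mathcal{W}$, $y_N\in\mathcal{Y}_\Omega$) are affine. A convex QP with affine constraints satisfies strong duality whenever it is feasible and bounded below, which holds here since the feasible set lies in compact polytopes and the objective is continuous. First I would therefore record strong duality for LoMPC: at any optimizer the primal optimal value equals the dual optimal value, and a maximizing dual point $(\boldsymbol\mu,\boldsymbol\lambda,\boldsymbol\gamma)$ with $\boldsymbol\lambda,\boldsymbol\gamma\geq 0$ exists.

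Second I would verify that $\zeta(\xi_0,\mathbf{u},\boldsymbol\mu,\boldsymbol\lambda,\boldsymbol\gamma)$ in (\ref{eqn:langdualfun}) is exactly the Lagrangian dual function of LoMPC, with $\boldsymbol\mu$ the multipliers for the dynamics equalities, $\boldsymbol\lambda$ for the $\mathcal{Y}$/$\mathcal{Y}_\Omega$ inequalities, and $\boldsymbol\gamma$ for the $\mathcal{W}$ inequalities. This is a direct but bookkeeping-heavy computation: form the Lagrangian, collect the terms multiplying each $\xi_n$ and $w_n$, and minimize the resulting quadratic in closed form. The pseudoinverses $U^\dag,V^\dag$ (and $W_2^{-1}$, a genuine inverse since $W_2\succ 0$) arise because minimizing $z^\textsf{T}Mz + c^\textsf{T}z$ yields $-\tfrac14\|c\|_{M^\dag}^2$ when $c$ lies in the range of $M$ and $-\infty$ otherwise; the unattainable case $\zeta=-\infty$ is harmless because the coupling constraint in $\mathbf{P}_{DB}$ can never be satisfied there. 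I would match the $U^\dag$, $V^\dag$, and $W_2^{-1}$ blocks term-by-term against (\ref{eqn:langdualfun}) to confirm the identification.

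Third, I would prove the exact case $\epsilon=0$ by a two-sided feasibility argument, noting that the two problems share the same objective and the same dynamics, Lyapunov constraint $h\leq 0$, and upper-level polytope constraints, so only the lower-level coupling differs. For the forward inclusion, any $(\boldsymbol\xi,\mathbf{u},\mathbf{w})$ feasible for $\mathbf{P}_B$ has $(\boldsymbol\xi,\mathbf{w})$ optimal for LoMPC; strong duality supplies multipliers making the primal value equal $\zeta$, so the coupling constraint holds with equality and the tuple is feasible for $\mathbf{P}_{DB}(\xi_0,0)$. For the reverse inclusion, any tuple feasible for $\mathbf{P}_{DB}(\xi_0,0)$ has $(\boldsymbol\xi,\mathbf{w})$ primal-feasible for LoMPC, while weak duality forces the LoMPC objective to be at least $\zeta$; combined with the coupling constraint this pins the objective to its optimal value, so $(\boldsymbol\xi,\mathbf{w})\in\arg\min\mathbf{P}_L(\xi_0,\mathbf{u})$ and the tuple is feasible for $\mathbf{P}_B$. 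Since the objectives agree, the minimizer sets coincide once the dual variables are projected out.

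Finally, for the limit I would establish upper semicontinuity of the solution map at $\epsilon=0$ by contradiction. The feasible sets are nested, so the optimal value $v(\epsilon)$ is nonincreasing with $v(\epsilon)\leq v(0)$. Suppose the distance does not vanish: then there is $\epsilon_k\downarrow 0$ and minimizers $s_k$ of $\mathbf{P}_{DB}(\xi_0,\epsilon_k)$ remaining a fixed distance $\delta>0$ from $\arg\min\mathbf{P}_B$. I expect the main obstacle to be extracting a convergent subsequence, since the primal components $(\boldsymbol\xi,\mathbf{u},\mathbf{w})$ lie in compact polytopes but the dual multipliers are a priori unbounded; I would resolve this either by measuring the distance only in the primal components (on which alone the objective depends) or by invoking boundedness of the optimal multipliers guaranteed by the compact primal feasible region under the affine constraint qualification. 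Passing to the limit, continuity of the objective and of $\zeta$ where finite shows the limit point is feasible for $\mathbf{P}_{DB}(\xi_0,0)$, and the bound $v(\epsilon_k)\leq v(0)$ forces it to be optimal, hence a member of $\arg\min\mathbf{P}_B$ by the $\epsilon=0$ equivalence, contradicting the fixed separation $\delta$.
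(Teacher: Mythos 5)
Your proposal is correct, and its first three steps (identifying LoMPC as a convex QP with affine constraints, computing the separable Lagrangian minimizations to recover $\zeta$ with the $U^\dag$, $V^\dag$, $W_2^{-1}$ blocks, and invoking strong duality to get the $\epsilon=0$ equivalence) coincide with the paper's argument; you are in fact more explicit than the paper about the two-sided weak-duality/strong-duality reasoning that converts the coupling constraint into lower-level optimality, which the paper compresses into the single sentence ``strong duality holds and so $\mathbf{P}_B(\xi_0)$ is equivalent to $\mathbf{P}_{DB}(\xi_0,0)$.'' Where you genuinely diverge is the limit statement: the paper disposes of it by citing epi-convergence theory (Proposition 7.4.d and Theorem 7.31 of Rockafellar--Wets), whereas you give a self-contained upper-semicontinuity argument via nested feasible sets, a contradiction subsequence, and compactness. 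Your route is more elementary and makes visible the two issues the citation hides: that the argmin sets of $\mathbf{P}_B$ and $\mathbf{P}_{DB}$ live in different spaces (so the distance must be read on the primal components, or optimal multipliers must be shown bounded), and that closedness of the $\epsilon=0$ feasible set is what delivers feasibility of the limit point. The epi-convergence route buys brevity and a ready-made framework for perturbed optimization problems; your route buys transparency at the cost of having to handle multiplier boundedness by hand. One small caution: your remark that the unattainable case $\zeta=-\infty$ is ``harmless because the coupling constraint can never be satisfied there'' is not quite the right justification, since the pseudoinverse formula returns a finite value (not $-\infty$) when the linear term leaves the range of $U$ or $V$; the clean fix is to observe that weak duality is only being invoked at dual points where the inner infima are attained, which is where the reverse inclusion actually needs it, or to append the range conditions as explicit constraints. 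This does not change the structure of your argument.
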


\begin{proof}
The Langrangian corresponding to LoMPC is $\mathcal{L} = \xi_N^\textsf{T}U\xi_N^{\vphantom{\textsf{T}}} + \textstyle\sum_{n=0}^{N-1}[ \xi_n^\textsf{T}V\xi_n^{\vphantom{\textsf{T}}} + \nu_n^\textsf{T}W\nu_n^{\vphantom{\textsf{T}}} + \mu_n^\textsf{T}(\xi_{n+1}^{\vphantom{\textsf{T}}} - A\xi_n^{\vphantom{\textsf{T}}} - B\nu_n^{\vphantom{\textsf{T}}}) + \lambda_n^\textsf{T}(F_y\xi_n - g_y) + \gamma_n^\textsf{T}(F_w\nu_n - g_w) + \lambda_N^\textsf{T}(F_o\xi_N-g_0)]$, where the $\lambda_n, \mu_n, \gamma_n$ variables have appropriate dimensions.  This Lagrangian has a separable structure, and so we individually consider its minimization in each decision variable. Moreover, each minimization is a convex quadratic program that we solve by setting the gradient in the corresponding decision variable equal to zero.  Then $\arg\inf_{\xi_N} \mathcal{L} \ni\underline{\xi}_N  = -\frac{1}{2}U^\dag(F_o^\textsf{T}\lambda_N^{\vphantom{\textsf{T}}} + \mu_{N-1}^{\vphantom{\textsf{T}}})$ and $\underline{\xi}_N^\textsf{T}U\underline{\xi}_N^{\vphantom{\textsf{T}}} + (F_o^\textsf{T}\lambda_N^{\vphantom{\textsf{T}}} + \mu_{N-1}^{\vphantom{\textsf{T}}})^\textsf{T}\underline{\xi}_N^{\vphantom{\textsf{T}}} = -\frac{1}{4}\|F_o^\textsf{T}\lambda_N^{\vphantom{\textsf{T}}} + \mu_{N-1}^{\vphantom{\textsf{T}}}\|_{U^\dag}^2$.  For $n\in[N-1]$, $ \arg\inf_{\xi_n} \mathcal{L} \ni \underline{\xi}_n = -\frac{1}{2}V^\dag(F_y^\textsf{T}\lambda_n^{\vphantom{\textsf{T}}} + \mu_{n-1}^{\vphantom{\textsf{T}}} - A^\textsf{T}\mu_{n}^{\vphantom{\textsf{T}}})$ and $\underline{\xi}_n^\textsf{T}V\underline{\xi}_n^{\vphantom{\textsf{T}}} + (F_y^\textsf{T}\lambda_n^{\vphantom{\textsf{T}}} + \mu_{n-1}^{\vphantom{\textsf{T}}} - A^\textsf{T}\mu_{n}^{\vphantom{\textsf{T}}})^\textsf{T}\underline{\xi}_n^{\vphantom{\textsf{T}}} = -\frac{1}{4}\|F_y^\textsf{T}\lambda_n^{\vphantom{\textsf{T}}} + \mu_{n-1}^{\vphantom{\textsf{T}}} - A^\textsf{T}\mu_{n}^{\vphantom{\textsf{T}}}\|_{V^\dag}^2$.  For $n\in\langle N\rangle$, $\underline{w}_n = \arg\inf_{w_n} \mathcal{L} = -\frac{1}{2}W_2^{-1}(F_w^\textsf{T}\gamma_n^{\vphantom{\textsf{T}}} - B^\textsf{T}\mu_{n}^{\vphantom{\textsf{T}}} + 2\Phi^\textsf{T}u_n)$ and $\underline{w}_n^\textsf{T}W_2\underline{w}_n^{\vphantom{\textsf{T}}} + (F_w^\textsf{T}\gamma_n^{\vphantom{\textsf{T}}} - B^\textsf{T}\mu_{n}^{\vphantom{\textsf{T}}} + 2\Phi^\textsf{T}u_n)^\textsf{T}\underline{w}_n^{\vphantom{\textsf{T}}} = -\frac{1}{4}\|F_w^\textsf{T}\gamma_n^{\vphantom{\textsf{T}}}- B^\textsf{T}\mu_{n}^{\vphantom{\textsf{T}}} + 2\Phi^\textsf{T}u_n\|_{W_2^{-1}}^2$.  Combining these intermediate calculations shows that the Lagrange dual function is given by the function $\zeta(\xi_0,\mathbf{u},\boldsymbol\mu,\boldsymbol\lambda,\boldsymbol\gamma)$ as defined earlier.  Since the constraints of LoMPC are all linear, strong duality holds \cite{boyd2004} and so $\mathbf{P}_B(\xi_0)$ is equivalent to $\mathbf{P}_{DB}(\xi_0, 0)$.  The final part of the result follows by applying epi-convergence theory, similar to the proofs \cite{aswani2015,aswani2016}.  Specifically, it follows by combining Proposition 7.4.d and Theorem 7.31 of \cite{rockafellar2009variational}.
\end{proof}

The $\epsilon$ in the reformulation $\mathbf{P}_{DB}(\xi_0, \epsilon)$ provides numerical regularization, and setting $\epsilon > 0$ ensures certain improved numerical properties \cite{aswani2015,aswani2016}.  The above result shows that solving $\mathbf{P}_{DB}(\xi_0, \epsilon)$ with a sufficiently small $\epsilon$ generates a solution close to the solution of the original BiMPC problem.

\subsection{Example: Simulation of Two-Dimensional System}

Consider a situation where the LoMPC is
\begin{equation}
\begin{aligned}
\mathbf{P}_L(\xi_0,\mathbf{u}) = \min_{\boldsymbol\xi,\mathbf{w}}\ & y_1^2 + y_0^2 + w_0^2 \\
 \text{s.t. }& x_1 = 2x_0 + y_0 + u_0 \\
& y_1 = 2y_0 + u_0 +w_0 \\
& w_0 \in [-3,3],\, y_1 \in [-1,1]
\end{aligned}
\end{equation}
Using the synthesis procedure from Sect. \ref{section:scfs} we can compute $\Gamma^\textsf{T} = \frac{1}{2}\cdot\begin{bmatrix}2 & 1\end{bmatrix}$, choose a stabilizing $G = -\frac{1}{2}\cdot\begin{bmatrix}3 & 1\end{bmatrix}$, and compute $H = \frac{1}{12}\begin{bmatrix}\hphantom{-}59  & -10\\ -10 & \hphantom{-}44\end{bmatrix}$. Theorem \ref{thm:synstab} implies that
\begin{equation}
\label{eqn:bimpcsim}
\begin{aligned}
\mathbf{P}_B(\xi_0) = \min_{\boldsymbol\xi,\mathbf{u},\mathbf{w}} \ & \xi_1^\textsf{T}\xi_1^{\vphantom{\textsf{T}}} + \xi_0^\textsf{T}\xi_0^{\vphantom{\textsf{T}}} +  \nu_0^\textsf{T}\nu_0^{\vphantom{\textsf{T}}} \\
\text{s.t. } &(\boldsymbol\xi,\mathbf{w}) \in \arg\min_{\boldsymbol\xi,\mathbf{w}} \mathbf{P}_L(\xi_0,\mathbf{u}) \\
&\xi_1^\textsf{T}H\xi_1^{\vphantom{\textsf{T}}} - \xi_0^\textsf{T}(H-\mathbb{I})\xi_0^{\vphantom{\textsf{T}}} \leq 0\\
& u_0 \in [-2, 2],\, x_1 \in [-1,1]
\end{aligned}
\end{equation}
is stabilizing.  Simulation results where the control action of BiMPC was computed using the duality-based reformulation (\ref{eqn:dbreform}) with regularization of $\epsilon = 0.01$ are shown in Fig. \ref{fig:num_phase}.

\begin{figure}[t]
\centering
\includegraphics[trim={2.35in 4in 2.1in 4.2in},clip,scale=.9]{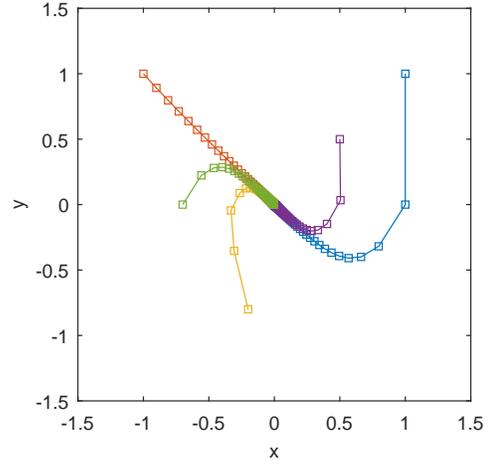}
\caption{A phase plot (with 5 initial conditions) for the overall system with BiMPC in (\ref{eqn:bimpcsim}) solved using our duality-based reformulation (\ref{eqn:dbreform}).}
\label{fig:num_phase}
\end{figure}

\section{Integer-Based Approach to Solving BiMPC}

Another approach to solving bilevel programs is to use mixed-integer programming \cite{aswani2016behavioral,aswani2016b}, and here we describe how to adapt these approaches to solve BiMPC. 

\subsection{Integer-Programming Reformulation of BiMPC}

Let $\kappa > 0$ be a constant, $\boldsymbol\sigma = \{\sigma_0,\ldots,\sigma_{N}\}$, and $\boldsymbol\tau = \{\tau_0,\ldots,\tau_{N-1}\}$.  Our integer-programming reformulation is
\begin{align}
\label{eqn:ipreform}
&\mathbf{P}_{IP}(\xi_0) = \\
&\begin{aligned}
\min_{\substack{\boldsymbol\xi,\mathbf{u},\mathbf{w}\\\boldsymbol\mu,\boldsymbol\lambda,\boldsymbol\gamma\\\boldsymbol\sigma,\boldsymbol\tau}}\ & \xi_N^\textsf{T}P\xi_N^{\vphantom{\textsf{T}}} + \textstyle\sum_{n=0}^{N-1} \xi_n^\textsf{T}Q\xi_n^{\vphantom{\textsf{T}}} + \nu_n^\textsf{T}R\nu_n^{\vphantom{\textsf{T}}}\\
\text{s.t. }& 2U\xi_N + \mu_{N-1} + F_o^\textsf{T}\lambda_{N}^{\vphantom{\textsf{T}}} = 0\\
&2V\xi_n + \mu_{n-1} -A^\textsf{T}\mu_n^{\vphantom{\textsf{T}}} + F_y^\textsf{T}\lambda_{n}^{\vphantom{\textsf{T}}} = 0 \text{ for } n\in\langle N\rangle\\
&2W_2w_n + 2\Phi u_n - B^\textsf{T}\mu_n^{\vphantom{\textsf{T}}} + F_w^\textsf{T}\gamma_{n}^{\vphantom{\textsf{T}}} = 0 \text{ for } n\in\langle N\rangle\\
&\lambda_n \leq \kappa \sigma_n,\, F_y^\textsf{T}\xi_n^{\vphantom{\textsf{T}}} - g_y \geq -\kappa(1-\sigma_n) \text{ for } n\in\langle N\rangle\\ 
&\lambda_N \leq \kappa \sigma_N,\, F_o^\textsf{T}\xi_N^{\vphantom{\textsf{T}}} - g_o \geq -\kappa(1-\sigma_N)\\ 
&\gamma_n \leq \kappa \tau_n, \, F_w^\textsf{T}w_n^{\vphantom{\textsf{T}}} - g_w \geq -\kappa(1-\tau_n) \text{ for } n\in\langle N\rangle\\ 
&\xi_{n+1} = A\xi_n + B\nu_n \text{ for } n \in\langle N\rangle\\
&\xi_1^\textsf{T}H\xi_1^{\vphantom{\textsf{T}}} - \xi_0^\textsf{T}(H-\mathbb{I})\xi_0^{\vphantom{\textsf{T}}}\leq 0\\
&F_xx_n\leq g_x,\, F_yy_n\leq g_y \text{ for } n\in\langle N\rangle\\
&F_uu_n\leq h_u,\, F_ww_n\leq h_w \text{ for } n\in\langle N\rangle\\
&F_xx_N\leq g_x,\, F_oy_N\leq g_o\\
&\lambda_n \geq 0,\, \gamma_n \geq 0 \text{ for } n\in\langle N\rangle\\
&\boldsymbol\sigma,\boldsymbol\tau \text{ are binary (0/1) valued}\nonumber
\end{aligned}
\end{align}
where $\lambda_n, \mu_n, \gamma_n, \sigma_n, \tau_n$ have the right size.  This mixed-integer quadratically-constrained quadratic program is solved by standard software \cite{gurobi}, and its solutions match BiMPC.

\begin{theorem}
Consider the problem $\mathbf{P}_{IP}(\xi_0)$.  We have that $\arg\min \mathbf{P}_B(\xi_0) = \arg\min \mathbf{P}_{IP}(\xi_0)$ for sufficiently large $\kappa$.
\end{theorem}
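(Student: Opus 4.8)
The plan is to replace the lower-level constraint $(\boldsymbol\xi,\mathbf{w})\in\arg\min\mathbf{P}_L(\xi_0,\mathbf{u})$ by the Karush--Kuhn--Tucker (KKT) conditions of LoMPC, and then to verify that the mixed-integer constraints of $\mathbf{P}_{IP}$ encode exactly these conditions once $\kappa$ is large. Because $U,V\succeq 0$ and $W_2\succ 0$, the LoMPC objective is convex in $(\boldsymbol\xi,\mathbf{w})$ (indeed strictly convex in $\mathbf{w}$, so the lower-level minimizer is unique), and every LoMPC constraint is affine. Hence the linearity constraint qualification holds and KKT is necessary and sufficient for global optimality \cite{boyd2004}. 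So the first step is to rewrite the bilevel constraint as the existence of multipliers $\boldsymbol\mu,\boldsymbol\lambda,\boldsymbol\gamma$ satisfying stationarity, primal feasibility, dual feasibility ($\lambda_n,\gamma_n\geq 0$), and complementary slackness. Reusing the Lagrangian $\mathcal{L}$ from the proof of the preceding theorem, the conditions $\nabla_{\xi_N}\mathcal{L}=0$, $\nabla_{\xi_n}\mathcal{L}=0$, and $\nabla_{w_n}\mathcal{L}=0$ yield precisely the three families of equality constraints at the top of $\mathbf{P}_{IP}$, while the primal- and dual-feasibility constraints appear verbatim.

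The second step is to show that the big-$M$ constraints with the binary vectors $\boldsymbol\sigma,\boldsymbol\tau$ are equivalent to complementary slackness. Read componentwise, the pair $\lambda_n\leq\kappa\sigma_n$ and $F_y\xi_n-g_y\geq-\kappa(1-\sigma_n)$ forces, for each component $i$, either $\sigma_n^{(i)}=0$ (giving $\lambda_n^{(i)}=0$) or $\sigma_n^{(i)}=1$ (giving $(F_y\xi_n-g_y)^{(i)}\geq 0$, which together with primal feasibility $F_yy_n\leq g_y$ activates that constraint); either alternative yields $\lambda_n^{(i)}(F_y\xi_n-g_y)^{(i)}=0$. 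The same disjunctive linearization handles the terminal multiplier $\lambda_N$ and the input multipliers $\gamma_n$. Conversely, any KKT point satisfying complementary slackness admits a binary assignment (set $\sigma_n^{(i)}=1$ exactly when the $i$th constraint is active) consistent with the big-$M$ constraints, provided $\kappa$ dominates both the active multiplier values and the inactive constraint slacks.

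This reduces the argument to exhibiting one finite $\kappa$ that works uniformly, which I expect to be the main obstacle. The slack bound is immediate: since $\mathcal{X},\mathcal{Y},\mathcal{Y}_\Omega,\mathcal{U},\mathcal{W}$ are compact polytopes and $\xi_0$ is fixed, the affine dynamics confine every admissible $\xi_n,w_n$ to a compact set, so the slacks $g_y-F_yy_n$, $g_w-F_ww_n$, and $g_o-F_oy_N$ are bounded. The harder part is a uniform upper bound on the optimal multipliers $\lambda_n,\gamma_n$. The lower-level QP is parametrized by $(\xi_0,\mathbf{u})$ ranging over the compact set $\{\xi_0\}\times\mathcal{U}^N$; under the linearity constraint qualification the optimal-multiplier map is locally bounded and outer semicontinuous, so a bounded multiplier selection exists over this compact parameter set, as established for this class of reformulations in \cite{aswani2016behavioral,aswani2016b}. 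Choosing $\kappa$ larger than this multiplier bound and the slack bound makes the big-$M$ constraints equivalent to the full KKT system.

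Finally, $\mathbf{P}_B$ and $\mathbf{P}_{IP}$ share the identical objective $\xi_N^\textsf{T}P\xi_N+\sum_n\xi_n^\textsf{T}Q\xi_n+\nu_n^\textsf{T}R\nu_n$ and the identical upper-level constraints, including the stability constraint $\xi_1^\textsf{T}H\xi_1-\xi_0^\textsf{T}(H-\mathbb{I})\xi_0\leq 0$. Since the two preceding steps show their feasible sets coincide in the shared variables $(\boldsymbol\xi,\mathbf{u},\mathbf{w})$ once the bilevel constraint is replaced by the equivalent mixed-integer KKT encoding, projecting out the auxiliary variables $\boldsymbol\mu,\boldsymbol\lambda,\boldsymbol\gamma,\boldsymbol\sigma,\boldsymbol\tau$ gives $\arg\min\mathbf{P}_B(\xi_0)=\arg\min\mathbf{P}_{IP}(\xi_0)$ for sufficiently large $\kappa$.
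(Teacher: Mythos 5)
Your proposal is correct and follows essentially the same route as the paper: replace the lower-level LoMPC by its KKT conditions (valid since it is a convex QP with affine constraints), linearize complementary slackness via the big-$M$/binary disjunction, and choose $\kappa$ large enough to dominate both the constraint slacks (bounded by compactness of $\mathcal{Y},\mathcal{W},\mathcal{Y}_\Omega$) and the optimal multipliers. The only cosmetic difference is how the multiplier bound is justified --- the paper argues via boundedness of a maximizer of the concave quadratic dual $\zeta$, while you invoke boundedness of the parametric multiplier map over the compact set $\{\xi_0\}\times\mathcal{U}^N$ --- but both rest on the same underlying facts and your version is, if anything, slightly more explicit about why a single $\kappa$ works uniformly.
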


\begin{proof}
The dual (\ref{eqn:langdualfun}) is concave quadratic in $(\boldsymbol\mu,\boldsymbol\lambda,\boldsymbol\gamma$); and $\mathcal{X}$, $\mathcal{Y}$, $\mathcal{U}$ are bounded. So we can choose $\kappa$ to bound the norm of a maximizer of $\zeta$ and of $F_y\xi_n - g_y$, $F_ww_n-g_w$ for $n\in\langle N\rangle$ and $F_o\xi_N-g_0$ for feasible points, since $\mathcal{Y}$, $\mathcal{W}$, $\mathcal{Y}_\Omega$ are bounded.  LoMPC is a convex quadratic program, so KKT equals optimality \cite{boyd2004}.  Replacing LoMPC in $\mathbf{P}_B(\xi_0)$ with KKT where complementarity terms $\lambda(F^\textsf{T}\eta-g) = 0$ are replaced by the equivalent $\lambda \leq \kappa \sigma$ and $F^\textsf{T}\eta-g\geq -\kappa(1-\sigma)$ for $\sigma\in\{0,1\}$ shows $\mathbf{P}_B(\xi_0)$ is equivalent to $\mathbf{P}_{IP}(\xi_0)$.
\end{proof}

\subsection{Case Study: Demand Response for Home Air-Conditioner}

Electric utilities use demand response (DR) to better match the usage and generation of electricity, and one approach is time-of-day pricing to disincentivize electricity usage during peak demand hours.  Here, we use BiMPC to design electricity pricing for a home with an air-conditioner controlled by linear MPC.  This scenario is motivated by recent work on using MPC to control HVAC \cite{ma2011,deng2010building,aswani2012,aswani2012energy,aswani2012identifying,he2016}, and is similar to the bilevel approach described in \cite{zugno2013}. 

In particular, consider a single home that uses the following (simplified) linear MPC to control an air-conditioner:
\begin{multline}
\mathbf{P}_L(\xi_0,\mathbf{u}) = \\
\begin{aligned}
\min_{\boldsymbol\xi,\mathbf{w}}\ & \textstyle\sum_{n=0}^N (\xi_n - T_d)^2 + \Phi u_nw_n \\
\text{s.t. } & \xi_{n+1} = A\xi_n - B w_n + \beta d_n + q \text{ for } n \in \langle N \rangle \\
& \xi_n\in[20,24],\, w_n\in[0,0.5] \text{ for } n \in \langle N+1 \rangle
\end{aligned}
\end{multline} 
where $\xi_n$ is room temperature ($^\circ C$), $T_d$ is desired room temperature, $\Phi$ quantifies the home owner's trade off between comfort and cost, $u_n$ is electricity price (cents/kWh), $w_n$ is the air-conditioner's duty cycle, $d_n$ is outdoor temperature, and $q$ is heating due to occupancy.  The sampling period is 15 minutes, and the parameter values $A=0.64$, $B = 2.64$, $\beta = 0.10$, $q = 6.98$ are from the HVAC model in \cite{aswani2012}.

If the electric utility would like to reduce electricity consumption during 1PM-5PM, then the problem of choosing time-of-day pricing can be written as the BiMPC given by
\begin{equation}
\begin{aligned}
\mathbf{P}_B(\xi_0) = \min_{\boldsymbol\xi,\mathbf{u},\mathbf{w}}\ & \textstyle100\sum_{n \in \mathcal{J}} w_n + \sum_{n=0}^{N-1}u_n \\
\text{s.t. } & (\boldsymbol\xi,\mathbf{w}) \in \arg\min_{\boldsymbol\xi,\mathbf{w}} \mathbf{P}_L(\xi_0,\mathbf{u}) \\
& u_n \in[5,10] \text{ for } n \in \langle N \rangle
\end{aligned}
\end{equation}
where $\mathcal{J}$ are the indices that correspond to 1PM-5PM.  The integer-programming reformulation (\ref{eqn:ipreform}) for this BiMPC was solved with Gurobi \cite{gurobi} and CVX \cite{cvx} in MATLAB R2016b.  Simulation results over one day with weather data from \cite{noaa} are shown in Fig. \ref{fig:dr_sim_fig}, and the chosen price induces the HVAC controller to precool the room to reduce electricity consumption between 1PM-5PM (which was the DR goal of the electric utility). The solution time on a laptop computer with a 2.4GHZ processor and 16GB RAM was on average 2.55s, with a minimum of 0.56s and maximum of 8.95s.

\begin{figure*}[t]
	\centering
	\includegraphics[trim={0.7in 4in 0.4in 4.1in},clip,scale=0.9]{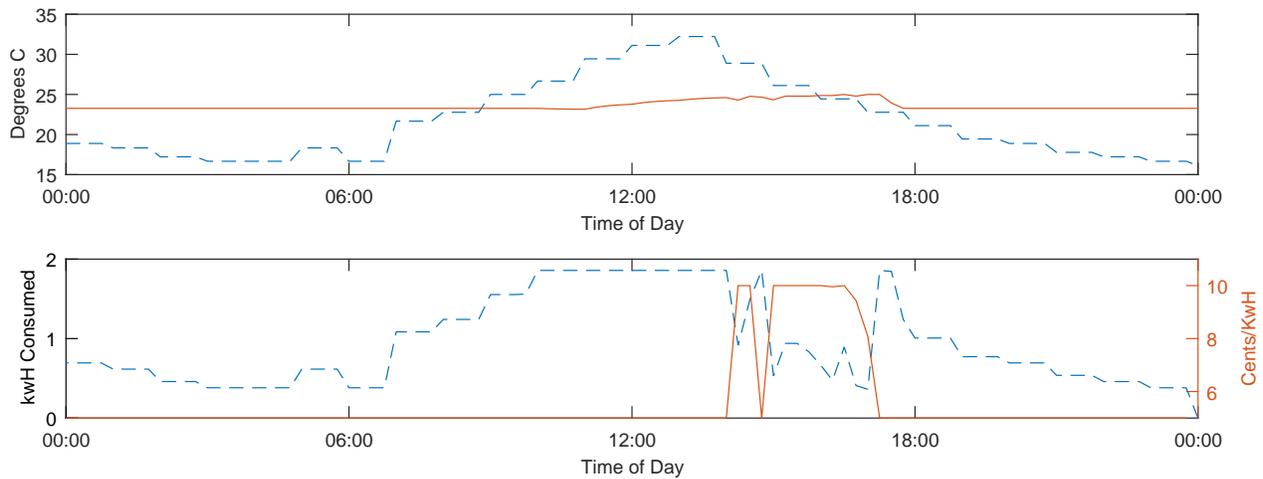}
	\caption{A simulation of the BiMPC controller shows reduced electricity consumption between 1PM-5PM by choosing electricity time-of-day pricing that induces the linear MPC of the HVAC to precool the room in the morning.  The top plot shows room temperature (solid red) and outdoor temperature (dashed blue), and the bottom plot shows electricity price (solid red) and HVAC energy consumption (dashed blue).}
	\label{fig:dr_sim_fig}
\end{figure*}

\section{Conclusion}

In this paper, we defined BiMPC, gave examples that show interconnections in dynamic Stackelberg games can lead to loss/gain of controllability or stability, provided sufficient conditions under an arbitrary finite MPC horizon for stabilizability of BiMPC, and developed an approach to synthesize a stabilizing BiMPC controller.  We derived duality-based and integer-programming-based techniques for numerically solving the optimization problem associated with BiMPC, and demonstrated these reformulations with simulations.

\bibliographystyle{IEEEtran}
\bibliography{IEEEabrv,bilmpc}

\end{document}